\newtheorem{theorem}{Theorem}[section]
\newtheorem{thm}[theorem]{Theorem}
\newtheorem{lemma}{Lemma}[section]
\newtheorem{remark}{Remark}[section]
\newtheorem{defi}{Definition}[section]
\newtheorem{example}{Example}[section]
\numberwithin{equation}{section}
\def\N{\mathbb N}
\begin{document}

\title{Lattice subsequences of fixed points of Toeplitz substitutions}


\author{Shishuang Liu}
\address{Department of Mathematics and Statistics, Central China Normal University, Wuhan, 430079, China}
\email{shishuangliu@mails.ccnu.edu.cn}

\author{Hui Rao$^*$} \address{Department of Mathematics and Statistics, Central China Normal University, Wuhan, 430079, China}
\email{hrao@mail.ccnu.edu.cn}

\date{\today}

\thanks{
 {\indent\bf Key words and phrases:}\ Toeplitz fixed point, lattice subsequence}

\thanks{* Corresponding author.}

\begin{abstract}
We define the modulo-$m$ Toeplitz fixed point generated by Toeplitz substitution and study the lattice subsequence of such fixed point. Moreover, we provide a method to check whether one modulo-$m$ Toeplitz fixed point is a lattice subsequence of another.
\end{abstract}
\maketitle

\section{\textbf{Introduction}}
Let $\mathcal A$ be a finite set, and  we call it an alphabet. Let $\mathcal A^*=\bigcup_{k\geq 0}{\mathcal A}^k$,
where $A^0=\{\varepsilon\}$ and $\varepsilon$ stands for the empty word.
Let ${\mathcal A}^\infty$ be the set of infinite sequence (or word) over ${\mathcal A}$.
Let $\gamma\tau$ be the concatenation of words $\gamma\in \mathcal A^*$ and $\tau\in \mathcal A^*\cup \mathcal A^\infty$.

Let $m\geq 2$ and let $W=w_1\dots w_{m-1}$ be a finite word over ${\mathcal A}$.
  We define the morphism $\sigma$ over ${\mathcal A}^*\cup {\mathcal A}^{\infty}$ by
 $$
 \sigma: a\mapsto Wa, \ \ a\in {\mathcal A},
 $$
 and call it a \emph{Toeplitz substitution} (determined by $W$).  Let $w_1$ be the initial letter of $W$, then
$$X=\lim_{k\to \infty} \sigma^{k}(w_1)$$
  is called the \emph{fixed point} of $\sigma$, and we call it a \emph{modulo-$m$ Toeplitz fixed point}.
 See \cite{CK97, G16 , QRWX10}.

 Toeplitz fixed point is a very special type of Toeplitz words. Toeplitz words were first introduced by Jacobs and Keane in \cite{JK69}. Jacobs and Keane  \cite{JK69} defined Toeplitz words and studied  the ergodic properties related to Toeplitz words. Prodinger and Urbanek \cite{PU79} and Sell \cite{S20}  studied  the  combinatorial properties of Toeplitz words.
Recently, Toeplitz words have gained increasing popularity as a model for quasicrystals, see for example \cite{BJL16}.   Liu and Qu \cite{LQ11,LQ12} studied the spectral properties of Schr\"odinger operator with Toeplitz potential.

\begin{defi}\emph{ Let $X=\{X(j)\}_{j\geq 1}$ and $Y=\{Y(j)\}_{j\geq 1}$ be two sequences. Let $q\geq 1$ be an integer. We say $Y$ is a \emph{$q$-subsequence} of $X$ if   $Y(j)=X(qj)$, $\forall j\geq 1$. In this case, we also say $Y$
is a \emph{lattice subsequence} of $X$.}
\end{defi}

In Huang, Liu and Rao \cite{HLR24}, to determine when two fractal sets are bi-Lipschitz equivalent, they arise the following question:

\medskip

\noindent \textbf{Question 1.} \emph{Let $X$ and $Y$ be two modulo-$m$ Toeplitz fixed points. How to judge whether
$Y$ is a $q$-subsequence of $X$?}

\medskip

The main goal of the present paper is to give an answer to the above question. From now on, we shall  use $X(q\N)$ to denote the $q$-subsequence of $X$. The following lemma is obvious.

\begin{lemma} Let $X$ and $Y$ be two modulo-$m$ Toeplitz fixed points.  Then $Y$ is a $q$-subsequence of $X$ if and only if $X(q\N)$ is a modulo-$m$ Toeplitz fixed point
and $Y(j)=X(qj)$ for $1\leq j\leq m-1$.
\end{lemma}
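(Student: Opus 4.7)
The plan is to split the equivalence into two implications. The forward direction is immediate from the definition of a $q$-subsequence: if $Y(j)=X(qj)$ for all $j\ge 1$, then in particular this holds for $1\le j\le m-1$, and $X(q\N)=Y$ is by hypothesis a modulo-$m$ Toeplitz fixed point.

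The reverse direction is the substantive part, and my plan is to deduce it from a simple uniqueness principle: a modulo-$m$ Toeplitz fixed point is completely determined by its first $m-1$ letters. To prove this, I would write $X=\lim_{k\to\infty}\sigma^k(w_1)$ for $\sigma:a\mapsto Wa$ with $W=w_1\cdots w_{m-1}$, and observe that
$$\sigma(w_1)=Ww_1=w_1w_2\cdots w_{m-1}w_1,$$
so the prefix of length $m-1$ of $X$ equals exactly $W$. Since $W$ (together with its initial letter $w_1$) determines the substitution $\sigma$, and $\sigma$ determines its fixed point, the claim follows.

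Armed with this uniqueness, the backward implication becomes almost a tautology: both $X(q\N)$ and $Y$ are assumed to be modulo-$m$ Toeplitz fixed points, and by the hypothesis they agree on positions $1,\ldots,m-1$. The uniqueness principle then forces $X(q\N)=Y$ as infinite sequences, i.e., $Y(j)=X(qj)$ for every $j\ge 1$, so $Y$ is indeed the $q$-subsequence of $X$.

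I do not expect any real obstacle here; the statement is essentially a consistency check. The only point that requires a brief verification is the uniqueness claim, and even that reduces to the one-line observation $\sigma(w_1)=Ww_1$.
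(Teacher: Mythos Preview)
Your argument is correct. The paper does not actually supply a proof of this lemma; it merely declares it ``obvious'' and moves on. Your write-up fills in exactly the natural justification: the forward implication is tautological, and the backward implication rests on the uniqueness principle that a modulo-$m$ Toeplitz fixed point is determined by its first $m-1$ letters, since those letters recover the word $W$ defining the substitution $\sigma$. There is nothing to compare approaches against, and no gap in what you wrote.
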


Therefore Question 1 is converted to
the following question:
\medskip

\noindent \textbf{Question 2.} \emph{Let $X$ be a modulo-$m$ Toeplitz fixed point.
For which $q$, $X(q\N)$ is also a modulo-$m$ Toeplitz fixed point?}

\medskip

It is easy to show that

\begin{lemma}\label{lem:coprime}
 Let $X=\{X(j)\}_{j\geq1}$ be a modulo-$m$ Toeplitz fixed point.  Let $h\geq 1$ be an integer such that  $\gcd\{m, h\}=1$.
 Let $q\geq 1$ be an integer. Then   $X({hq\N})$ is a  modulo-$m$ Toeplitz fixed point if and only if  $X(q\N)$ is a  modulo-$m$ Toeplitz fixed point.
\end{lemma}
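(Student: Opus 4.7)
The plan is to work from the closed-form expression $X(n)=w_{v(n)}$, where $v_m(n)$ denotes the largest $s\ge 0$ with $m^s\mid n$ and $v(n):=(n/m^{v_m(n)})\bmod m\in\{1,\dots,m-1\}$; this follows by induction on $v_m(n)$ from the Toeplitz recursions $X((k-1)m+r)=w_r$ and $X(km)=X(k)$. The key algebraic input is that when $\gcd(h,m)=1$ one has $v_m(hn)=v_m(n)$, because $h$ is a unit modulo $m$ and hence $m\mid hn'$ forces $m\mid n'$; consequently $v(hn)\equiv h\,v(n)\pmod m$.

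With the closed form in hand I would reformulate the Toeplitz property of $Y:=X(q\N)$. The recursion $Y(mk)=X(qmk)=X(qk)=Y(k)$ is automatic, so $Y$ is a modulo-$m$ Toeplitz fixed point precisely when $X(qj)=X(qr)$ for every $r\in\{1,\dots,m-1\}$ and every $j\equiv r\pmod m$. Via $X(n)=w_{v(n)}$ this is the requirement that $w_\cdot$ is constant on the set
$$S_r:=\{v(qj):j\ge 1,\,j\equiv r\pmod m\}\subseteq\{1,\dots,m-1\}.$$
Applying the identity $v(qhj)\equiv h\,v(qj)\pmod m$, the parallel condition for $X(hq\N)$ becomes: $w_\cdot$ is constant on $hS_r:=\{hs\bmod m:s\in S_r\}$ for every $r$.

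The lemma then reduces to the set identity $hS_r=S_{hr\bmod m}$ for all $r$, because $\{hS_r\}_r$ would then be merely a reindexing of $\{S_r\}_r$ along the bijection $r\mapsto hr\bmod m$ of $\{1,\dots,m-1\}$, making the two Toeplitz conditions coincide. The forward inclusion is immediate via $j':=hj$: if $s=h\,v(qj)\bmod m$ with $j\equiv r\pmod m$, then $j'\equiv hr\pmod m$ and $s=v(qj')\in S_{hr\bmod m}$. For the reverse inclusion I would pick $h^*\in\{1,\dots,m-1\}$ with $hh^*\equiv 1\pmod m$, which is automatically coprime to $m$; given $s=v(qj')\in S_{hr\bmod m}$ with $j'\equiv hr\pmod m$, setting $j:=h^*j'\ge 1$ yields $j\equiv h^*hr\equiv r\pmod m$ and $v(qj)=v(h^*qj')\equiv h^*s\pmod m$, so that $h\,v(qj)\equiv hh^*s\equiv s\pmod m$ and $s\in hS_r$. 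I expect the main obstacle to be this reverse inclusion, specifically ensuring that one may choose the positive integer representative $h^*$ of $h^{-1}\bmod m$ to itself be coprime to $m$, so that the identity $v(h^*n)\equiv h^*v(n)\pmod m$ legitimately applies.
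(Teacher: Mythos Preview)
Your proof is correct and takes a genuinely different route from the paper's. The paper argues directly from the characterization in Lemma~\ref{lem:basic}: assuming $Z=X(q\N)\in\mathcal{J}_m$, it verifies conditions (i) and (ii) for $Y=X(hq\N)$ by hand (using that $m\nmid j$ and $\gcd(h,m)=1$ force $m\nmid hj$, so the almost-$m$-periodicity of $Z$ can be iterated $h$ times to get $Z(hj)=Z(hj+hm)$); for the converse it picks $p$ with $ph\equiv 1\pmod m$, applies the forward direction to obtain $Y(p\N)\in\mathcal{J}_m$, and then checks via a short valuation computation that $Y(p\N)=Z$. Your approach is more structural: you extract the closed form $X(n)=w_{v(n)}$, recast ``$X(q\N)\in\mathcal{J}_m$'' as constancy of $w$ on the sets $S_r$, and prove the set identity $hS_r=S_{hr\bmod m}$, which handles both implications at once because $r\mapsto hr\bmod m$ permutes $\{1,\dots,m-1\}$. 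The paper's argument is shorter and needs no preliminary setup, while yours makes the underlying symmetry transparent and produces a reusable explicit formula for $X(n)$. Finally, the ``obstacle'' you flag is not one: any $h^*$ with $hh^*\equiv 1\pmod m$ is automatically coprime to $m$, since a common prime divisor $p$ of $h^*$ and $m$ would give $hh^*\equiv 0\not\equiv 1\pmod p$.
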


Notice that  $X({mq\N})=X(q\N)$
 (see Lemma \ref{lem:basic}). Therefore, to characterize all lattice subsequences of a modulo-$m$ Teoplitz fixed point,
we just need to check the $q$-subsequence such that $q|m^s$ for some $s\geq 1$ and $m\nmid q$.

 Let $W=w_1\dots w_n$ be a word and let $2\leq q<n$. We say $W$ is \emph{almost $q$-periodic}, if  $q\nmid j$ implies $w_j=w_{j+q}$.
Similarly, if $W$ is an infinite word, we define $W$ is almost $q$-periodic in the same manner.

Let $a^{k}$ be the constant word with length $k$ and with letter $a$.  For each word $\omega$ we use $\omega(n)$ or $\omega_n$ to denote the letter at index $n$.

\begin{thm}\label{thm:main}
Let $X=\{X(j)\}_{j\geq 1}$   be a modulo-$m$ Toeplitz fixed point which is not a constant word. Let
$q\geq 2$ be an integer such that  $m\nmid q$ and $q|m^s$ for some $s\geq 1$.
Then the following hold:

(i) If $q\nmid m^2$, then $X(q\N)$ is not a modulo-$m$ Toeplitz fixed point.

 (ii) If $q| m$, then  $X(q\N)$   is a modulo-$m$ Toeplitz fixed point if and only
if  $X(1)\dots X(m)$ is almost $q$-periodic.

 (iii) If $q|m^2$, then  $X(q\N)$   is a modulo-$m$ Toeplitz fixed point if and only
if  $X(1)\dots X(m^2)$ is almost $q$-periodic.
\end{thm}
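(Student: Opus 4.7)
The plan is to translate the Toeplitz property of $X(q\N)$ into a concrete arithmetic condition on $X$ and then dispatch the three cases. As a preliminary, I would establish the characterization that a sequence $Y$ is a modulo-$m$ Toeplitz fixed point if and only if $Y(mj)=Y(j)$ for every $j\geq 1$ and $Y$ is almost $m$-periodic (i.e. $m\nmid j$ implies $Y(j)=Y(j+m)$); this follows at once from unfolding the definition of $\sigma:a\mapsto Wa$.

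Applied to $Y(j):=X(qj)$, the condition $Y(mj)=Y(j)$ is automatic by Lemma~\ref{lem:basic}, so the theorem reduces to analysing when $X(qj)=X(qj+qm)$ for every $j$ with $m\nmid j$. I would split by whether $m\mid qj$: if $m\nmid qj$, iterating the almost $m$-periodicity of $X$ exactly $q$ times gives the equality for free. The substantive case is $m\mid qj$ with $m\nmid j$; setting $d=\gcd(q,m)$ and $q'=q/d$, such $j$ are precisely $j=(m/d)\ell$ with $d\nmid\ell$, and using $X(mk)=X(k)$ the condition collapses to
\begin{equation*}
X(q'\ell)=X(q'(\ell+d))\quad\text{for every } \ell\geq 1 \text{ with } d\nmid\ell. \tag{$\star$}
\end{equation*}

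I would then analyse $(\star)$ case by case. In case (ii), $q\mid m$ gives $d=q$ and $q'=1$, so $(\star)$ is precisely the almost $q$-periodicity of $X$; combining $q\mid m$ with $X(n)=w_{n\bmod m}$ for $m\nmid n$, a short lifting argument shows this is equivalent to almost $q$-periodicity of the prefix $X(1)\cdots X(m)$. In case (iii), $q\mid m^2$ and $q\nmid m$ give $q'\geq 2$ and $q'\mid d$; I would unfold $(\star)$ using the rule that $X(n)$ equals $w_i$ with $i$ the first non-zero digit of $n$ in base $m$, so that each instance of $(\star)$ becomes an identification $w_i=w_j$, and then verify that this family of identifications coincides with that produced by almost $q$-periodicity on $X(1)\cdots X(m^2)$, since the recursive structure of $X$ makes longer-range relations redundant. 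In case (i), $q\nmid m^2$ gives $q'\nmid d$, so some prime $p$ satisfies $v_p(q')>v_p(d)$; iterating $(\star)$ along suitably chosen $\ell$'s then forces chains of identifications linking all the $w_i$ together, contradicting the hypothesis that $X$ is non-constant.

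The main obstacle is case (iii): one must verify that $(\star)$ and almost $q$-periodicity of $X(1)\cdots X(m^2)$ generate the same partition of $\{w_1,\ldots,w_{m-1}\}$, with neither producing a relation invisible to the other. Case (i) is also delicate, requiring a careful iteration argument to ensure the resulting chain of identifications actually exhausts the alphabet rather than stabilising on a non-trivial quotient.
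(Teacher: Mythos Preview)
Your reduction to $(\star)$ is correct and is a genuinely different entry point from the paper's. The paper never isolates $(\star)$; instead it introduces a composition $\circ$ on ``question-mark words'' $u_1\dots u_{r-1}?$ and proves a single equivalence (its Theorem~\ref{thm:lattice}): $X(q\N)\in\mathcal J_m$ iff $X$ is almost $q$-periodic iff $X(1)\dots X(m^s-1)?$ factors as $(X(1)\dots X(q-1)?)\circ(X(q)X(2q)\dots X(m^s-q)?)$. Parts (ii) and (iii) of Theorem~\ref{thm:main} then fall out by taking $s=1,2$. For part~(i) the paper runs a structural argument (Lemmas~\ref{lem:almost-periodic}--\ref{lem:city}): from $X(q\N)\in\mathcal J_m$ it shows $X$ is almost $d$-periodic, that $X(jd)=X(d)$ whenever $\mathrm{lcm}(m,q)\nmid jd$, and from this that $q_1=q/d$ must be a proper divisor of $d$ (hence $q\mid m^2$), as otherwise $X(1)\dots X(d)$ collapses to $a^d$ and $X$ is constant.

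What your route buys is directness: you bypass the $\circ$-formalism and see immediately that the only nontrivial constraint lives on the sublattice $q'\Z$. What the paper's route buys is a finer structural statement (Theorem~\ref{thm:structure}): in the genuine $q\nmid m$ case one gets a canonical factorisation $X(1)\dots X(m-1)?=Q\circ T\circ D$ and an explicit formula $X(q\N)=(D\circ T\circ Q)^{(\infty)}$, neither of which is visible from $(\star)$ alone. Be aware that your case~(iii) is not pure bookkeeping: showing that $(\star)$ implies almost $q$-periodicity of the full prefix $X(1)\dots X(m^2)$ (not just at multiples of $q'$) is essentially the paper's implication (i)$\Rightarrow$(ii) in Theorem~\ref{thm:lattice}, proved there by the substitution $j=\ell\cdot m^s/q$; you will need an equivalent step. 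Likewise your case~(i) sketch (``chains of identifications exhaust the alphabet'') is on the right track but will require lemmas of the same weight as the paper's Lemmas~\ref{lem:j-j+1}--\ref{lem:city} to make rigorous.
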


 \begin{remark}
\emph{
Let $X$ and $Y$ be two modulo-$m$ Toeplitz fixed points.  It is not hard to show that  if $q|m^s$ and $Y=X(q\N)$, then $X=Y(\frac{m^s}{q} \N)$.
}
\end{remark}

\section{\textbf{Lattice subsequences of Toeplitz words}}

  We shall use ${\mathcal J}_m$ to denote the collection of modulo-$m$ Teoplitz fixed points (over the alphabet ${\mathcal A}$).

\begin{lemma}\label{lem:basic}
A word  $X=\{X(j)\}_{j\geq 1}$  belongs to ${\mathcal J}_m$ if and only if the following two conditions hold:

(i) For any $j\geq 1$, $X(mj)=X(j)$;

(ii) If $m\nmid j$, then $X(j)=X(j+m)$.
\end{lemma}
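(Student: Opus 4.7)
The plan is to prove both directions by unpacking what the recursive definition $\sigma(a) = Wa$ with $|W| = m-1$ says about the positions of the fixed point. The key structural observation is that if $X$ is the fixed point of $\sigma$, then from $\sigma(X) = X$ and the fact that $\sigma(a)$ has length $m$, the segment of $X$ at positions $(k-1)m+1, \ldots, km$ must equal $\sigma(X(k)) = w_1 \cdots w_{m-1} X(k)$. So the indexing breaks naturally into blocks of length $m$, the last entry of block $k$ being $X(k)$ itself.

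For the forward direction, property (i) follows immediately from reading off the last letter of block $k$: $X(km) = X(k)$. For property (ii), write $j = (k-1)m + i$ with $1 \leq i \leq m-1$; then $X(j) = w_i$ and, since $j+m = km + i$ lies in the same relative position $i$ inside block $k+1$, also $X(j+m) = w_i$. Hence $X(j) = X(j+m)$ whenever $m \nmid j$.

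For the converse, suppose $X$ satisfies (i) and (ii). I would simply set $W := X(1) X(2) \cdots X(m-1)$, let $\sigma : a \mapsto Wa$, and verify $\sigma(X) = X$ coordinate-wise. Fix $k \geq 1$. For position $(k-1)m + i$ with $1 \leq i \leq m-1$, iterate (ii): because $i \not\equiv 0 \pmod m$ implies $i + tm \not\equiv 0 \pmod m$ for every $t \geq 0$, property (ii) applies at each step and yields
\[
X(i) = X(i+m) = X(i+2m) = \cdots = X\bigl((k-1)m + i\bigr),
\]
so $X((k-1)m + i) = X(i) = w_i$, matching the $i$-th letter of $\sigma(X(k)) = W X(k)$. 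For the last position $i = m$ of the block, property (i) gives $X(km) = X(k)$, matching the final letter of $W X(k)$. Thus $\sigma(X) = X$, and since $X$ begins with $X(1) = w_1$, it coincides with $\lim_{k\to\infty}\sigma^k(w_1)$, so $X \in \mathcal{J}_m$.

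There is no real obstacle here; the argument is bookkeeping once one sees that (i) records the ``diagonal'' relation coming from the last letter of each $\sigma$-image, while (ii) records the periodicity of the prefix $W$ across all blocks. The only point to handle carefully is the iteration of (ii), where one must check that the hypothesis $m \nmid j$ is preserved at every step so that the chain of equalities is legitimate.
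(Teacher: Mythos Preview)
Your argument is correct and follows essentially the same approach as the paper: both directions rest on the block decomposition $X = \sigma(X)$, reading off (i) from the last letter of each block and (ii) from the repeated prefix $W$. Your converse verifies $\sigma(X)=X$ coordinate-wise and then infers $X=\lim_k\sigma^k(w_1)$, whereas the paper packages the same computation as $X(1)\cdots X(m^k)=\sigma\bigl(X(1)\cdots X(m^{k-1})\bigr)$; these are the same idea presented in slightly different bookkeeping.
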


\begin{proof} Suppose $X\in {\mathcal J}_m$. Denote $a=X(1)$. Then $\sigma^n(a)=\sigma(\sigma^{n-1}(a))$,
so $\sigma^n(a)$ is almost $m$-periodic. Therefore, $X$ is almost $m$-periodic, which confirms (ii).

Since $\sigma(X(1)\dots X(j))=\sigma(X(1))\sigma(X(2))\dots \sigma(X(j))$, we conclude that
$X(mj)$ is the last element of $\sigma(X(j))=WX(j)$, so $X(mj)=X(j)$ and (i) is confirmed.

On the other hand, suppose that (i) and (ii) hold. Denote $W=X(1)\dots X(m-1)$. Then for any $k\geq 1$, we have
$$X(1)\dots X(m^k)=\prod_{j=1}^{m^{k-1}} WX(mj)=\prod_{j=1}^{m^{k-1}} \sigma(j)=\sigma(X(1)\dots X(m^{k-1})),$$
which implies that $X\in {\mathcal J}_m$.
\end{proof}

\begin{proof}[\textbf{Proof of Lemma \ref{lem:coprime}}] Let $Y=X(hq\N)$ and $Z=X(q\N)$.
 Let $h\geq 2$.

 Suppose $Z\in {\mathcal J}_m$. First, it is obvious that
$$Y(mj)=X(m(hqj))=X(hqj)=Y(j).$$
Pick $j\geq 1$ such that $m\nmid j$. Since   $\gcd\{m,h\}=1$, we  have  $m\nmid hj$. Therefore,
$$
Y(j)=Z(hj)=Z(hj+hm)=Z(h(j+m))=Y(j+m).
$$
By Lemma \ref{lem:basic},  we obtain that $Y$ is a modulo-$m$ Teoplitz fixed point.

 Suppose $Y\in {\mathcal J}_m$. Let $p\geq 1$ be an integer such that $ph\equiv 1 \pmod m$. Then $\gcd\{p, m\}=1$.
 By what we have just proved, $Y(p\N)\in \mathcal J_m$.
 Pick $j\geq 1$. Write $ph=ym+1$ and $qj=m^tj'$ where $m\nmid j'$. Then
  $$Y(pj)=X(phqj)=X((ym+1)m^tj')=X(j')=X(m^tj')=Z(j).$$
  Hence $Z=Y(p\N)\in \mathcal J_m$.
\end{proof}

\medskip

%


Now we introduce words with question mark to study Teoplitz words. In other words, we study words over the alphabet
  $\mathcal A\cup\{?\}$.

For $U=u_1\dots u_{r-1}?$ and $V=v_1\dots v_{s-1}?$ where $u_j, v_i\in {\mathcal A}$ for $1\leq j\leq r-1$, $1\leq i\leq s-1$. We define \emph{the composition of $U$ and $V$} by
$$
U\circ V=\prod_{j=1}^s (u_1\dots u_{r-1} v_j),
$$
where $v_s=?$. Then all entries of $U\circ V$ belong to ${\mathcal A}$ except the last entry.
 By this reason, we can define the  \emph{$n$-th iteration of $U$} by
$$
(U)^{(1)}=U \text{ and } (U)^{(n)}=(U)^{(n-1)}\circ U,\ \forall n\geq 2.
$$
Clearly the operation `$\circ$' is not commutative but it is associative, namely,
$$(U\circ V)\circ W=U\circ(V\circ W),$$
where $U=u_1\dots u_{r-1}?$, $V=v_1\dots v_{s-1}?$ and $W=w_1\dots w_{t-1}?$.
Moreover, for $\boldsymbol{\omega}=(\omega_j)_{j=1}^\infty\in {\mathcal A}^\infty$, we define
$$
U\circ \boldsymbol{\omega}= \prod_{j=1}^\infty (u_1\dots u_{r-1} \omega_j).
$$

The following lemma is obvious.

\begin{lemma}If $X=\{X(j)\}_{j\geq 1}$ is a modulo-$m$ Teoplitz fixed point. Then
$$
X=\lim_{n\to \infty} (X(1)\dots X(m-1)?)^{(n)}:=[X(1)\dots X(m-1)?]^{(\infty)}.
$$
\end{lemma}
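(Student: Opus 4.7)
Setting $U:=X(1)\dots X(m-1)?$, the plan is to prove by induction on $n\ge 1$ the sharper identity
\[
U^{(n)} \;=\; X(1)\,X(2)\cdots X(m^n-1)\,?.
\]
The lemma then follows at once: $U^{(n)}$ and $U^{(n+1)}$ agree on the first $m^n-1$ coordinates, so the coordinatewise limit is well-defined, and by the displayed identity it coincides with $X$. The base case $n=1$ is just the definition of $U$.

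For the inductive step, write $U^{(n-1)}=V\,?$, where by hypothesis $V=X(1)\dots X(m^{n-1}-1)$ lies entirely in $\mathcal A$. Unfolding the definition of composition gives
\[
U^{(n)} \;=\; U^{(n-1)}\circ U \;=\; (Vw_1)(Vw_2)\cdots(Vw_{m-1})(V\,?),
\]
a word of length $m\cdot m^{n-1}=m^n$ whose only $?$ sits at position $m^n$. Matching this against $X(1)\cdots X(m^n-1)\,?$ reduces, via Lemma \ref{lem:basic}, to two points: (a) the ``seam'' letters satisfy $w_j=X(jm^{n-1})$ for $1\le j\le m-1$, which follows by iterating Lemma \ref{lem:basic}(i) to obtain $X(jm^{n-1})=X(j)=w_j$; and (b) $X(km^{n-1}+i)=X(i)$ for $1\le k\le m-1$ and $1\le i\le m^{n-1}-1$.

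The bookkeeping in (b) is the only substantive step, and it is also where I expect the main obstacle. A convenient way around it is to first extract from Lemma \ref{lem:basic} the following description: $X(j)$ depends only on the lowest nonzero digit of $j$ in base $m$. Indeed, writing $j=m^r j'$ with $m\nmid j'$ yields $X(j)=X(j')$ by $r$-fold application of Lemma \ref{lem:basic}(i), and then Lemma \ref{lem:basic}(ii) iteratively reduces $X(j')$ to $X(j'\bmod m)=w_{j'\bmod m}$. Because the base-$m$ expansions of $km^{n-1}+i$ and $i$ agree in positions $0,\dots,n-2$, and the constraint $1\le i\le m^{n-1}-1$ forces $i$ to have a nonzero digit in that range, the two numbers share the same lowest nonzero digit; hence (b) follows. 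Everything else is definition-chasing, so once this indexing identity is pinned down the induction goes through cleanly.
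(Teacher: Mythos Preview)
Your argument is correct. The induction on $n$ together with the ``lowest nonzero base-$m$ digit'' description of $X(j)$ (a direct consequence of Lemma~\ref{lem:basic}) cleanly yields $U^{(n)}=X(1)\cdots X(m^n-1)\,?$, from which the limit statement follows immediately.

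The paper itself gives no proof of this lemma; it simply declares it obvious. Your write-up makes that obviousness precise. If you want an even shorter route matching the paper's implicit viewpoint, note that for $U=X(1)\cdots X(m-1)?$ one has $U\circ\boldsymbol{\omega}=\sigma(\boldsymbol{\omega})$ for any $\boldsymbol{\omega}\in\mathcal A^\infty$; since $X$ is the fixed point of $\sigma$, iterating gives that the first $m^n-1$ letters of $X=U^{(n)}\circ X$ coincide with those of $U^{(n)}$, and the conclusion follows. Your digit-based argument and this fixed-point argument are two sides of the same coin; yours has the advantage of being entirely self-contained from Lemma~\ref{lem:basic}.
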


\begin{thm}\label{thm:lattice}
Let $X=\{X(j)\}_{j\geq 1}\in {\mathcal J}_m$. Let
$q\geq 2$ be an integer such that $m\nmid q$ and  $q|m^s$  for some integer $s\geq 1$.
Then the following three statements are equivalent:

(i) $X(q\N) \in {\mathcal J}_m$.

(ii) $X$ is almost $q$-periodic, in other words,
\begin{equation*}\label{eq:comp}
X=(X(1)\dots X({q-1})?)\circ X(q\N).
\end{equation*}

(iii)  $X(1)\dots X(m^s-1)?=(X(1)\dots X(q-1)?)\circ (X(q)X(2q)\dots X(m^s-q)?).$
\end{thm}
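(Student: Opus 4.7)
The plan is to establish the equivalences via (ii) $\Rightarrow$ (iii) $\Rightarrow$ (ii) together with (ii) $\Leftrightarrow$ (i). The direction (ii) $\Rightarrow$ (iii) is immediate: the word identity in (iii) is simply the truncation of the infinite identity in (ii) to its first $m^s$ positions.

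For (iii) $\Rightarrow$ (ii), I exploit the self-similar block structure of $X$. Iterating Lemma \ref{lem:basic} yields $X(m^s j) = X(j)$, and together with $\sigma$-invariance this gives a block decomposition of $X$ into blocks of length $m^s$: position $(k-1)m^s + i$ equals $X(i)$ for $1 \leq i \leq m^s - 1$, and position $km^s$ equals $X(k)$. Given any $n$ with $q \nmid n$, I consider whether $n$ and $n+q$ lie in the same block. If so, translating to $[1, m^s-1]$ and applying (iii) yields $X(n) = X(n+q)$. Otherwise they straddle a unique boundary $km^s$ (unique since $q < m^s$, which follows from $q \mid m^s$ and $m \nmid q$); setting $a = n - (k-1)m^s$ and $b = n+q - km^s$, one checks $a \in [m^s-q+1, m^s-1]$, $b \in [1, q-1]$, and $a = b + (m^s - q)$. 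Since $q \mid m^s$, iterating (iii) along the chain $b, b+q, b+2q, \ldots, a$ delivers $X(b) = X(a)$, hence $X(n+q) = X(n)$.

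For (ii) $\Rightarrow$ (i), I verify the two conditions of Lemma \ref{lem:basic} for $Z := X(q\N)$. The condition $Z(mj) = Z(j)$ is immediate from $X(mqj) = X(qj)$. For the almost $m$-periodicity, given $m \nmid j$, I must show $X(qj + qm) = X(qj)$. Let $d := \gcd(q,m)$, $q = dq'$, $m = dm'$, with $\gcd(q',m') = 1$. When $m \nmid qj$, iterated almost $m$-periodicity of $X$ directly gives the identity. When $m \mid qj$, necessarily $m' \mid j$; writing $j = m'j'$ with $d \nmid j'$ and applying $X(mk) = X(k)$ to both sides reduces the identity to $X(q'j') = X(q'j' + q)$, which follows from almost $q$-periodicity of $X$ since $d \nmid j'$ forces $q \nmid q'j'$.

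For (i) $\Rightarrow$ (ii), the main obstacle, my approach is to derive (iii) from (i), which then closes the cycle via (iii) $\Rightarrow$ (ii). The hypothesis (i) translates, after the same $\gcd$ factorization, to $X(q'k'') = X(q'(k''+d))$ for all $k''$ with $d \nmid k''$. I plan to establish $X(i) = X(i \bmod q)$ for $i \in [1, m^s-1]$ with $q \nmid i$ by strong induction on $i$: when $m \mid i$, apply $X(i) = X(i/m)$ together with the inductive hypothesis; when $m \nmid i$, use almost $m$-periodicity of $X$ to drop $i$ by multiples of $m$. The principal difficulty will be tracking how the residue $i \bmod q$ transforms under these two reduction steps; whenever the transformation is not the identity on residues, the constraints from (i) should provide precisely the needed identifications, with the bookkeeping again governed by $q = dq'$, $m = dm'$.
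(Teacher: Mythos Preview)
Your arguments for (ii)$\Rightarrow$(iii), (iii)$\Rightarrow$(ii), and (ii)$\Rightarrow$(i) are correct. The paper instead runs the cycle (i)$\Rightarrow$(ii)$\Rightarrow$(iii)$\Rightarrow$(i), dispatching (iii)$\Rightarrow$(i) in one line via the composition formalism: with $U=X(1)\dots X(q-1)?$ and $V=X(q)X(2q)\dots X(m^s-q)?$, (iii) reads $X(1)\dots X(m^s-1)?=U\circ V$, whence $X=(U\circ V)^{(\infty)}=U\circ(V\circ U)^{(\infty)}$ and $X(q\N)=(V\circ U)^{(\infty)}\in{\mathcal J}_m$. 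Your block argument for (iii)$\Rightarrow$(ii) and your direct Lemma~\ref{lem:basic} verification for (ii)$\Rightarrow$(i) are longer but equally valid alternatives.

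The genuine gap is in your plan for (i)$\Rightarrow$(iii). The strong induction on $i$, reducing via $i\mapsto i-m$ (when $m\nmid i$) or $i\mapsto i/m$ (when $m\mid i$), gets stuck. Take $m=6$, $q=4$, $s=2$, so $d=2$, $q'=2$. For $i=5$ neither reduction is available ($i<m$ and $m\nmid i$), yet the non-trivial identity $X(5)=X(1)$ is required. For $i=10$ your reduction yields $X(10)=X(4)$, but (iii) demands $X(10)=X(1)$; your derived constraint $X(q'k)=X(q'(k+d))$ for $d\nmid k$ concerns only even positions and says nothing directly linking $X(1)$ to $X(4)$ or $X(5)$. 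The needed identities \emph{do} follow from (i), but only after first multiplying $i$ up by a power of $m$ to reach a multiple of $q$, using the Toeplitz structure of $Y$ there, and then dividing back down --- a move that increases $i$ and so cannot be packaged as your downward induction.

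That ``multiply up, then come down'' manoeuvre is precisely the paper's direct proof of (i)$\Rightarrow$(ii), and it avoids all residue bookkeeping. Since $Y\in{\mathcal J}_m$, one checks $Y$ is almost $m^s$-periodic. For $\ell\in\{1,\dots,q-1\}$ and $p=m^s/q$ one has $m^s\nmid \ell p$ (as $q\nmid\ell$), hence $Y(\ell p)=Y(\ell p+tm^s)$, i.e.\ $X(\ell m^s)=X((\ell+tq)m^s)$; now Lemma~\ref{lem:basic}(i) gives $X(\ell)=X(\ell+tq)$ for all $t\ge 0$. You should replace your inductive plan with this argument.
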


\begin{proof}   Let $Y=X(q\N)$.

(i) $\Rightarrow$ (ii).  Since $Y$ is a modulo-$m$ Toeplitz word, we have
\begin{equation*}\label{eq:Ym}
  Y(j)=Y(j+tm^s), \quad\forall m\nmid j,   t\in \mathbb N,
\end{equation*}
therefore,
$$
Y(jm)=Y(jm+tm^{s+1}), \quad\forall m\nmid j, t\in \mathbb N,
$$
which implies
$$
X(jmq)=X(jmq+qtm^{s+1}), \quad\forall m\nmid j, t\in \mathbb N.
$$
Denote $p=m^s/q$. Pick  $1\leq \ell \leq q-1$
 and set $j=\ell p$,  then $jmq=\ell m^{s+1}$, so we obtain
$$
X(\ell)=X(\ell m^{s+1})=X(\ell m^{s+1}+qtm^{s+1})=X(\ell+tq),\quad t\in \mathbb N,
$$
which means that $X$ is almost $q$-periodic.

(ii)$\Rightarrow$(iii). Using (ii) to calculate the first $m^s$ terms of $X$, we obtain (iii).

(iii) $\Rightarrow$ (i).  Set $U=X(1)\dots X(q-1)?$ and $V=X(q)X(2q)\dots X((p-1)q)?$, we have
 $X(1)\dots X(m^s-1)?=U\circ V$. Therefore
 $$
 X=[X(1)\dots X(m^s-1)?]^{(\infty)}=(U\circ V)^{(\infty)}=U\circ (V\circ U)^{(\infty)},
 $$
 which implies that $X(q\N)=(V\circ U)^{(\infty)}\in {\mathcal J}_m$.

\end{proof}

\section{\textbf{The case $q\nmid m$}}
 In this section, we strengthen Theorem \ref{thm:lattice} by showing that if $X(q\N)$
 is a modulo-$m$ Teoplitz fixed point, then necessarily $q|m^2$. Moreover, in the case that
 $q\nmid m$, we  deduce that $X(1)\dots X(m)$ has a very special decomposition.

Let $X=\{X(j)\}_{j\geq 1}$ be a modulo-$m$ Toeplitz fixed point.
Let $q\geq 2$ be an integer such that $q|m^s$ for some integer $s \geq 2$, $m\nmid q$ and $q\nmid m$.
Then  $d=\gcd\{q, m\}>1$.  Denote $q_1=q/d$, $m_1=m/d$.

In what follows we always assume that the $q$-subsequence of $X$ is a modulo-$m$ Toeplitz fixed point.
\begin{lemma}\label{lem:almost-periodic}The following hold.

(i) $X$ is almost $q$-periodic.

(ii) $X$ is almost $d$-periodic.
\end{lemma}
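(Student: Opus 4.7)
The plan is to derive part (i) directly from Theorem \ref{thm:lattice}, and to obtain part (ii) by combining the almost $q$-periodicity given by (i) with the almost $m$-periodicity built into $X \in \mathcal{J}_m$, via a B\'ezout-style argument bridging the two congruence lattices.

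Part (i) requires no real work: the hypotheses of the present lemma, together with the standing assumption of the section that $X(q\N) \in \mathcal{J}_m$, match precisely those of Theorem \ref{thm:lattice}. The implication (i)$\Rightarrow$(ii) there states exactly that $X$ is almost $q$-periodic.

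For part (ii), fix $j \geq 1$ with $d \nmid j$. Since $d \mid m$ and $d \mid q$, we have $m \nmid j$ and $q \nmid j$; moreover $d \nmid j+d$ (otherwise $d\mid j$), so also $q \nmid j+d$. Iterating the relation $X(i) = X(i+m)$, valid whenever $m \nmid i$, along the arithmetic progression $j, j+m, j+2m,\ldots$ (which never meets $m\N$) yields $X(j) = X(j+am)$ for all $a \geq 0$. Likewise, iterating the almost $q$-periodicity from (i) starting at $j+d$ gives $X(j+d) = X((j+d)+bq)$ for all $b \geq 0$.

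It remains to link the two chains. Writing $m = dm_1$ and $q = dq_1$ with $\gcd(m_1, q_1) = 1$, B\'ezout produces integers solving $am_1 - bq_1 = 1$, hence $am - bq = d$; shifting the solution by a sufficiently large multiple of $(q_1, m_1)$ produces positive integers $a,b \geq 1$ with $am - bq = d$. For this choice $j + am = (j+d) + bq$, so
\[
X(j) = X(j+am) = X\bigl((j+d)+bq\bigr) = X(j+d),
\]
proving almost $d$-periodicity. The only delicate point is ensuring positivity of $(a,b)$, which is a standard shift by multiples of $(q_1,m_1)$; the hypotheses $q \nmid m$ and $m \nmid q$ enter only implicitly, guaranteeing $1 < d < \min(m,q)$ so that $d$ is a strictly stronger periodicity than either $m$ or $q$ alone.
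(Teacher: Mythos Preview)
Your proof is correct and follows essentially the same route as the paper: part (i) is an immediate citation of Theorem~\ref{thm:lattice}, and part (ii) is a B\'ezout argument writing $d = am - bq$ with $a,b \geq 1$ and then iterating the two available almost-periodicities along arithmetic progressions that avoid the respective lattices. The paper phrases (ii) as $X(j_0 + nd) = X(j_0)$ for $j_0 \in \{1,\dots,d-1\}$ rather than your $X(j) = X(j+d)$, but the underlying mechanism is identical.
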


\begin{proof}
(i) By Theorem \ref{thm:lattice}, $X$ is almost $q$-periodic, \text{i.e.},
$$X(j)=X(j+q), \quad \forall q\nmid j.$$

(ii) Pick $j_0\in \{1,\dots, d-1\}$. Write $d=xm-yq$  with $x, y\geq 1$.
Then $X(j_0+nd)=X(j_0+nxm-nyq)=X(j_0)$, since $X$ is almost $q$-periodic and almost $m$-periodic. The lemma is proved.
\end{proof}

\begin{lemma}\label{lem:j-j+1}
If $\emph{\text{lcm}}\{m,q\}\nmid jd$ and $\emph{\text{lcm}}\{m,q\}\nmid (j+1)d$ , then $
X(jd)=X((j+1)d).$
\end{lemma}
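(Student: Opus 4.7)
The plan is to link $X(jd)$ and $X((j+1)d)$ by a chain of equalities, each coming from a single application of almost $m$-periodicity or almost $q$-periodicity. I will set up two candidate chains (a ``forward'' and a ``backward'' one) and show that under the hypothesis at least one of them is usable.

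First I would rephrase the hypothesis arithmetically. Since $\text{lcm}\{m,q\} = mq/d$, the condition $\text{lcm}\{m,q\} \nmid jd$ is equivalent to $m_1 q_1 \nmid j$, and because $\gcd(m_1,q_1)=1$ this is equivalent to $m_1 \nmid j$ or $q_1 \nmid j$. The standing assumptions $m \nmid q$ and $q \nmid m$ give $m_1, q_1 \geq 2$, so in particular neither number can divide two consecutive integers.

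Next I would choose a B\'ezout identity $d = xm - yq$ with $0 < x < q_1$ and $0 < y < m_1$; this is possible because $m_1$ is invertible modulo $q_1$. Setting $x' = q_1 - x$ and $y' = m_1 - y$ yields a second, opposite-sign identity $d = -x'm + y'q$, again with positive coefficients. From these I build two paths from $jd$ to $(j+1)d$:
\begin{align*}
\text{(forward)}\quad & jd \to jd+m \to \cdots \to jd+xm \to jd+xm-q \to \cdots \to (j+1)d,\\
\text{(backward)}\quad & jd \to jd+q \to \cdots \to jd+y'q \to jd+y'q-m \to \cdots \to (j+1)d.
\end{align*}
Using Lemma \ref{lem:basic}(ii) for almost $m$-periodicity and Lemma \ref{lem:almost-periodic}(i) for almost $q$-periodicity, the forward path preserves the value of $X$ provided $m \nmid jd$ and $q \nmid jd+xm$; via the identity $jd+xm = (j+1)d+yq$ these conditions become $m_1 \nmid j$ and $q_1 \nmid j+1$. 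An analogous computation shows that the backward path works when $q_1 \nmid j$ and $m_1 \nmid j+1$.

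Finally I would close by case analysis: if neither path is available, then $(m_1 \mid j$ or $q_1 \mid j+1)$ and $(q_1 \mid j$ or $m_1 \mid j+1)$ hold simultaneously, and each of the four conjunctions gives a contradiction---either $m_1 q_1 \mid j$, or $m_1 q_1 \mid j+1$, or $m_1$ or $q_1$ divides two consecutive integers. I expect this combinatorial balance to be the main subtlety of the proof: a single B\'ezout identity is not enough, so one must set up both sign conventions and verify that their failure conditions cannot both hold under the two divisibility hypotheses on $jd$ and $(j+1)d$.
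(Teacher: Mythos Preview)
Your proof is correct and follows essentially the same approach as the paper: two chains linking $jd$ to $(j+1)d$ via almost $m$- and almost $q$-periodicity, with complementary validity conditions ($m_1\nmid j$ and $q_1\nmid j+1$ versus $q_1\nmid j$ and $m_1\nmid j+1$), followed by the observation that the hypotheses rule out simultaneous failure of both. The only cosmetic difference is that the paper builds its second chain from the same B\'ezout relation $d=xm-yq$ (meeting at the common value $jd+q+(q_1-1)yq=(j+1)d+(q_1-1)xm$) rather than from your symmetric identity $d=y'q-x'm$; your version is arguably tidier, and your case analysis is more systematically organised, but the underlying idea is identical.
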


\begin{proof}  $q\nmid m$ implies that   $q_1\geq 2$.
Write $d=xm-yq$  with $x, y\geq 1$.

If $q\mid  jd$, then $m\nmid jd$ and   $q\nmid (j+1)d$,   we have
\begin{equation}\label{eq:case-1a}
X(jd)=X(jd+xm), \quad \text{(since $X$ is almost $m$-periodic)}
\end{equation}
and
\begin{equation}\label{eq:case-1b}
X((j+1)d)=X((j+1)d+yq), \quad \text{(since $X$ is almost $q$-periodic)}
\end{equation}
which imply $X(jd)=X((j+1)d)$.

If $m|jd$, then $m\nmid jd+d$ and $q\nmid jd$, we have
\begin{equation}\label{eq:case-1c}
X(jd)=X(jd+q)=X(jd+q+\frac{q-d}{d}yq)
\end{equation}
and
\begin{equation}\label{eq:case-1d}
X((j+1)d)=X((j+1)d+\frac{q-d}{d}xm),
\end{equation}
which imply $X(jd)=X((j+1)d)$.

Now suppose $q\nmid jd$ and $m\nmid jd$. If $q\nmid (j+1)d$, we use \eqref{eq:case-1a} and \eqref{eq:case-1b},
if $m\nmid (j+1)d$,   we use \eqref{eq:case-1c} and \eqref{eq:case-1d}.
\end{proof}

\begin{lemma}\label{lem:city} The following hold.

(i) If $jd$ is not a multiple of $\emph{\text{lcm}}\{m,q\}$, then $X(jd)=X(d).$

(ii) Let $j\in \{1,\dots, d-1\}$. If $q_1\nmid j$, then $X(j)=X(d)$.
\end{lemma}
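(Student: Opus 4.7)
The plan is to prove (i) by iterating Lemma~\ref{lem:j-j+1} together with an ``almost-$L$-periodicity'' statement, and then to deduce (ii) from (i) via the Toeplitz identity $X(jm)=X(j)$ from Lemma~\ref{lem:basic}(i). Throughout, set $L=\text{lcm}\{m,q\}=m_1 q_1 d$, so that the condition ``$jd$ not a multiple of $L$'' is equivalent to $m_1 q_1\nmid j$. Note also that $m\nmid q$ and $q\nmid m$ force $m_1,q_1\ge 2$, hence $m_1 q_1\ge 4$.

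For (i), I would first iterate Lemma~\ref{lem:j-j+1} for $k=1,2,\dots,m_1 q_1-2$ to obtain the chain
\[
X(d)=X(2d)=\cdots=X((m_1 q_1-1)d),
\]
since neither $kd$ nor $(k+1)d$ is a multiple of $L$ when $k+1\le m_1 q_1-1$. To extend this to all $j$ with $m_1 q_1\nmid j$, the key intermediate claim is: if $L\nmid jd$, then $X(jd)=X(jd+L)$. This I would prove by a case split. Since $L\nmid jd$ forces $m\nmid jd$ or $q\nmid jd$, in the first case $m\nmid jd+im$ for every $i\ge 0$ (otherwise $m\mid jd$), so almost-$m$-periodicity applied $q_1$ times yields $X(jd)=X(jd+m)=\cdots=X(jd+q_1 m)=X(jd+L)$; the case $q\nmid jd$ is symmetric, using almost-$q$-periodicity $m_1$ times. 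Writing $j=c m_1 q_1+r$ with $1\le r\le m_1 q_1-1$ and iterating the identity backward $c$ times (each intermediate term $rd+iL$ fails to be a multiple of $L$, since $rd$ does), I get $X(jd)=X(rd)$, which equals $X(d)$ by the chain above.

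For (ii), let $j\in\{1,\dots,d-1\}$ with $q_1\nmid j$. Lemma~\ref{lem:basic}(i) gives $X(j)=X(jm)$, and $jm=(jm_1)d$. The condition $q_1\nmid j$ is equivalent to $m_1 q_1\nmid jm_1$, so part (i) applies to $k=jm_1$ and yields $X(jm_1 d)=X(d)$, whence $X(j)=X(d)$.

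The main obstacle is the almost-$L$-periodicity step in part (i): Lemma~\ref{lem:j-j+1} by itself only connects adjacent multiples of $d$ and stalls at every multiple of $L$, so one must bridge across consecutive multiples of $L$ separately. The case split based on whether $m\nmid jd$ or $q\nmid jd$ is forced by the definition of the lcm, but once the two translations $+q_1 m$ and $+m_1 q$ are available, combining them with the base chain reduces everything to bookkeeping, and part (ii) is then an immediate consequence via the Toeplitz identity.
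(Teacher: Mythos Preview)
Your proof is correct and uses the same ingredients as the paper (Lemma~\ref{lem:j-j+1} for adjacent multiples of $d$, and almost-$m$/almost-$q$-periodicity to bridge across multiples of $L$), but the organization differs. The paper argues (i) by direct induction on $j$: if $L\nmid(j-1)d$, Lemma~\ref{lem:j-j+1} and the induction hypothesis give $X(jd)=X((j-1)d)=X(d)$; if $L\mid(j-1)d$, say $(j-1)d=xq=ym$, then either $q\nmid jd$ and almost-$q$-periodicity yields $X(jd)=X(jd-xq)=X(d)$ in one stroke, or $m\nmid jd$ and almost-$m$-periodicity gives $X(jd)=X(jd-ym)=X(d)$. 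So the paper jumps from $jd$ all the way down to $d$ at each bridge point, rather than isolating your intermediate almost-$L$-periodicity claim and reducing modulo $m_1q_1$. Your route is a bit longer but makes the periodic structure explicit; the paper's induction is terser. Part~(ii) is handled identically in both.
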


\begin{proof} (i) We prove the assertion by induction on $j$. Suppose $\text{lcm}\{m,q\}\nmid jd$.

If $\text{lcm}\{m,q\}\nmid (j-1)d$, then  by Lemma \ref{lem:j-j+1}, $X(jd)=X((j-1)d)=X(d)$, where
the last equality is due to the induction hypothesis.

Now suppose $\text{lcm}\{m,q\}\mid (j-1)d$. Write $(j-1)d=xq=ym$.

If $q\nmid jd$, then $X(jd)=X(jd-xq)=X(d)$, since $X$ is almost $q$-periodic.

If $m\nmid jd$, then $X(jd)=X(jd-ym)=X(d)$, since $X$ is almost $m$-periodic. Item (i) is proved.

(ii)  Clearly $X(j)=X(jm)$. That $q_1\nmid j$ implies that $q\nmid jm$, hence, by  item (i),
$X(jm)=X(d)$.
\end{proof}

  By the following result, to check whether $X(q\N)$ is modulo-$m$ Teoplitz fixed point, we just need to examine the
word $X(1)\dots X(m)$.

\begin{thm}\label{thm:structure}
Let $X\in {\mathcal J}_m$  and assume that it is not a constant word. Let
$q\geq 2$ be an integer such that $q\nmid m$, $m\nmid q$ and   $q|m^s$  for some integer $s\geq 1$.
Denote $d=\gcd\{m,q\}$ and $q_1=q/d$.  Denote $a=X(1)$.
Then $X(q\N)\in {\mathcal J}_m$  if and only if $q|m^2$, $q_1$ is a proper factor of $d$ and $X(1)\dots X(m-1)?$ can be
decomposed into
\begin{equation}\label{eq:QTD}
X(1)\dots X(m-1)?=(a^{q_1-1}?)\circ (X(q_1)\dots X(tq_1)?)\circ (a^{m_1-1}?):=Q\circ T\circ D.
\end{equation}
  where $t=d/q_1-1$. In this case,
  $$X(q\N)=(D\circ T\circ Q)^{(\infty)}.$$
\end{thm}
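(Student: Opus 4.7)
The plan is to prove the biconditional in two directions, using Theorem \ref{thm:lattice} to convert the hypothesis $X(q\N)\in\mathcal J_m$ into almost $q$-periodicity of $X$, and then exploiting the rigidity statements of Lemmas \ref{lem:almost-periodic} and \ref{lem:city}. Throughout I write $a=X(1)$, so Lemma \ref{lem:city}(ii) reads $X(j)=a$ for every $j\in\{1,\dots,d-1\}$ with $q_1\nmid j$, and Lemma \ref{lem:city}(i) reads $X(jd)=a$ whenever $jd$ is off the lattice $\mathrm{lcm}\{m,q\}\,\N$.

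For the forward direction I would first force $q_1$ to be a proper factor of $d$, which yields $q=q_1 d\mid d^2\mid m^2$ and hence $q\mid m^2$. The strategy is by contradiction: if $q_1\geq d$, Lemma \ref{lem:city}(ii) pins $X$ to $a$ on all of $\{1,\dots,d-1\}$, so by almost $d$-periodicity $X(j)=a$ whenever $d\nmid j$; combined with Lemma \ref{lem:city}(i) and the Toeplitz self-similarity $X(mj)=X(j)$, a descending argument through multiples of $\mathrm{lcm}\{m,q\}$ forces $X$ to be constant, contradicting the hypothesis. The subcase $q_1<d$ with $q_1\nmid d$ is handled by the same descent while carefully tracking the few "free" positions in $\{1,\dots,d-1\}$ that are multiples of $q_1$. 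Once $q_1\mid d$ with $q_1<d$ is in hand, the decomposition (\ref{eq:QTD}) is a direct entry-by-entry check: a position in $\{1,\dots,m\}$ writes uniquely as $(j-1)d+i$ with $1\leq j\leq m_1$, $1\leq i\leq d$, and one matches the corresponding letter of $Q\circ T\circ D$ (read off from the definition of $\circ$) with $X((j-1)d+i)$, using almost $d$-periodicity when $i<d$ and Lemma \ref{lem:city}(i) when $i=d$ (noting $j<m_1<m_1 q_1$).

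For the backward direction together with the formula $X(q\N)=(D\circ T\circ Q)^{(\infty)}$, the key algebraic observation is the commutation identity
\[
D\circ Q \;=\; Q\circ D \;=\; a^{m_1 q_1-1}?,
\]
which is immediate from the definition of $\circ$ because $D=a^{m_1-1}?$ and $Q=a^{q_1-1}?$ are both constant-$a$ words terminated by $?$. Assuming the decomposition, $X=(Q\circ T\circ D)^{(\infty)}$, and the associativity identity $(U\circ V)^{(\infty)}=U\circ(V\circ U)^{(\infty)}$ lets us peel off factors one at a time: peeling off $Q$ (of length $q_1$) yields the $q_1$-subsequence $(T\circ D\circ Q)^{(\infty)}$; peeling off $T$ (of length $d/q_1$) then yields the $d$-subsequence $(D\circ Q\circ T)^{(\infty)}$ of $X$; rewriting this atom via the commutation as $Q\circ(D\circ T)$ and peeling off $Q$ one final time identifies the $q$-subsequence of $X$ as $(D\circ T\circ Q)^{(\infty)}$. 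Since this is the infinite iterate of a length-$m$ word ending in $?$, it lies in $\mathcal J_m$, which completes the backward direction and yields the claimed formula.

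The hard step, and main obstacle, is the descent in the forward direction in the subcase $q_1<d$ with $q_1\nmid d$: here there are genuine non-$a$ values on $\{1,\dots,d-1\}$ that propagate via almost $d$-periodicity, so one cannot simply argue "$X=a$ off multiples of $d$". Instead the descent must carefully interleave almost $d$-periodicity, the Toeplitz self-similarity $X(mj)=X(j)$, and the escape condition of Lemma \ref{lem:city}(i), iterating the reduction until non-constancy of $X$ is shown to be incompatible with $q_1\nmid d$. Making this iterative argument airtight is where the bulk of the technical work lies.
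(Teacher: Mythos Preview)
Your overall strategy matches the paper's: both directions use the same ingredients (Theorem \ref{thm:lattice}, Lemmas \ref{lem:almost-periodic} and \ref{lem:city}, the commutation $Q\circ D=D\circ Q$, and the peeling identity for iterated compositions), and your backward direction and your derivation of \eqref{eq:QTD} are essentially the paper's arguments.

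Where you go astray is in assessing the subcase $q_1<d$, $q_1\nmid d$ as the ``main obstacle'' requiring a delicate iterative descent. The paper dispatches it in three lines with a single direct trick: for any $\ell\geq 1$ with $\ell q_1<d$, set $j=\ell q_1+d$; since $q_1\nmid d$ we have $q_1\nmid j$, hence $\mathrm{lcm}\{m,q\}=q_1 m\nmid jm$, while $d\mid m\mid jm$; Lemma \ref{lem:city}(i) then gives $X(jm)=X(d)=a$, so $X(j)=X(jm)=a$, and almost $d$-periodicity (note $d\nmid \ell q_1$) yields $X(\ell q_1)=X(j)=a$. Thus every position in $\{1,\dots,d\}$ carries $a$, and since $X(1)\dots X(m)$ is $d$-periodic (already established from Lemma \ref{lem:almost-periodic}(ii) together with Lemma \ref{lem:city}(i) on the first $m$ letters), one gets $X(1)\dots X(m-1)=a^{m-1}$, forcing $X=a^\infty$. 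The same observation removes the need for any descent in your case $q_1\geq d$: a modulo-$m$ Toeplitz fixed point is determined by its first $m-1$ letters, so once $X(1)\dots X(m-1)=a^{m-1}$ you are done immediately.
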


\begin{proof}
First we prove the necessity. By Lemma \ref{lem:city}(i), we have
\begin{equation}\label{eq:d-seq}
  X(jd)=X(d), \quad \text{ for } 1\leq j\leq m/d.
\end{equation}
Especially, we have $X(1)=X(m)=X(d)$.

 By Lemma \ref{lem:almost-periodic}(ii),  $X(1)\dots X(m)$ is almost $d$-periodic, which together with  \eqref{eq:d-seq} imply that $X(1)\dots X(m)$ is $d$-periodic, i.e., $X(1)\dots X(m)=[X(1)\dots X(d)]^{m/d}.$
On the other hand, by Lemma \ref{lem:city}(ii) and \eqref{eq:d-seq}, $X(1)\dots X(d)$ can be decomposed into
$$X(1)\dots X(d)=[a^{q_1-1}?]\circ [X(q_1)\dots X(tq_1)a],$$
where $t=d/q_1-1$.
Therefore,
$$X(1)\dots X(m-1)?=(a^{q_1-1}?)\circ (X(q_1)\dots X(tq_1)?)\circ (a^{m_1-1}?):=Q\circ T\circ D.$$

Next, we  show that $q\mid m^2$ and $q_1$ is a proper factor of $d$.

If $q_1\geq d$, then by Lemma \ref{lem:city}(ii), $X(j)=a$ for all $j\in \{1,\dots, d-1\}$.
Since $X$ is almost $d$-periodic, we obtain that $X=a^\infty$. This contradiction proves that
$$q_1< d.$$

Suppose on the contrary that $q_1$ is not a proper factor of $d$. Pick $\ell\geq 1$ such that $ \ell q_1<d$.
Let  $j=\ell q_1+d$, then $q_1$ is not a factor of $d$ implies that $q_1\nmid j$, hence  $jm$ is not a multiple of $q_1m=mq/d$, so by Lemma \ref{lem:city}(i),
we have
$$X(j)=X(jm)=X(d)=a.$$
 On the other hand, since $X$ is almost $d$-periodic, we have $X(\ell q_1)=X(j)=a$.
 Therefore, for all $j\in \{1, \dots, d-1\}$, no matter $j$ is a multiple of $q_1$ or not, we always have $X(j)=a$.
 So $X$ is a constant word. A contradiction. Clearly $q=q_1d$ is a factor of $d^2$, and hence it is a factor of $m^2$.
The necessity  is proved.

Now we prove the sufficiency. Suppose \eqref{eq:QTD} holds. Using $D\circ Q=Q\circ D$, we obtain
$$
\begin{array}{rl}
X&=\lim_{k\to \infty} (Q\circ T\circ D)^{(k)}=Q\circ T\circ \lim_{k\to \infty} (D\circ Q\circ T)^{(k)}\\
&=Q\circ T\circ \lim_{k\to \infty} (Q\circ D\circ T)^{(k)}=Q\circ T\circ Q\circ \lim_{k\to \infty} (D\circ T\circ Q)^{(k)},
\end{array}
$$
which proves that $X(q\N)=\lim_{k\to \infty} (D\circ T\circ Q)^{(k)}$ is a modulo-$m$ Teoplitz fixed point.
The sufficiency is proved.
\end{proof}


%

\begin{example}
\emph{ Let  $m=12$. Let $X=[aabaaa aabaa?]^{(\infty)} $ be a modulo-$12$ Toeplitz fixed point. It is seen that
$$X=  [(aa?)\circ (b?)\circ(a?)]^{(\infty)}.$$}
\emph{Clearly $X(3\N)=[(b?)\circ(a?)\circ(aa?)]^{(\infty)} $ and $X(6\N)=[(a?)\circ(aa?)\circ(b?)]^{(\infty)}$.}

\emph{Let $q=18$, then  $d=\gcd(12,18)=6$, $q_1=q/d=3$ and $m_1=m/d=2$.
and
$$X(18\N)=  [(a?)\circ (b?)\circ(aa?)]^{(\infty)}.$$
Moreover, $X(q\N)\in {\mathcal J}_m$ if and only if $q=m^khp$ where $k\geq 0$, $\gcd\{h, m\}=1$ and $p\in \{1,3,6,18\}$.
One can show that  $X(q\N)$ coincides with one of $X$, $X(3\N)$, $X(6\N)$ and $X(18\N)$.
 }
\end{example}

\begin{example}
\emph{Let $m=6$. Let $X=[aaaba?]^{(\infty)}$. Then  }
 $$X(2\N)=[ababa?]^{(\infty)};$$
 $$X(5\N)=[abaaa?]^{(\infty)}.$$
\emph{Moreover, $X(q\N)\in {\mathcal J}_m$ if and only if $q=m^khp$ where $k\geq 0$, $\gcd\{h, m\}=1$ and $p\in \{1,2\}$;
 indeed, $X(q\N)$ coincides with one of  $X$, $X(2\N)$  and $X(5\N)$.}

\end{example}


\end{document}